\documentclass[12pt]{article}
\usepackage{amsmath, amsthm, amscd, amsfonts, amssymb, graphicx, color}
\usepackage{epstopdf}
\usepackage{subfigure}
\usepackage{setspace}
\usepackage[left=1in, right=1in, bottom=1.4in]{geometry}
\usepackage{bm}
\usepackage{cite}
\newtheorem{theorem}{Theorem}[section]

\newtheorem{lemma}[theorem]{Lemma}

\newtheorem{proposition}[theorem]{Proposition}
\newtheorem{remark}{Remark}

\title{ \bf Some coefficient estimates on  complex-valued kernel $\alpha-$harmonic mappings$^{\ast}$ }
\date{}
 \begin{document}

\maketitle
\renewcommand{\thefootnote}{\fnsymbol{footnote}}
\noindent{\footnotesize\rm{ \begin{center}\bf Bo-Yong Long \end{center}\
 \begin{center}  School of Mathematical Sciences, Anhui University, Hefei  230601, China
\end{center}}
\footnotetext{\hspace*{-5mm}
\begin{tabular}{@{}r@{}p{16.0cm}@{}}
&$^{*}$ Supported by   NSFC (No.12271001) and  Natural Science Foundation of Anhui Province
(2308085MA03),  China.
\\
&E-mail:  boyonglong@163.com;

\end{tabular}}

%\def\abstractname{}
%\begin{abstract}
\begin{quote}
{\small \noindent {\bf Abstract}:\   We call  a kind of mappings induced by a kind of weighted Laplace operator   as complex-valued kernel $\alpha-$harmonic mappings. In this article, for this class of mappings, the Heinz type lemma is established, and the best Heinz type inequality is obtained. Next, the extremal function of Schwartz's Lemma is discussed. Finally, the coefficients are estimated for the subclass of complex-valued kernel $\alpha-$harmonic mappings whose coefficients are real numbers.
}\\
  {{\small \noindent{\bf Keywords}:  Weighted harmonic mapping; Heinz's type lemma; Schwarz lemma; typically real; coefficient estimates   }\\
  \small \noindent{\bf 2010 Mathematics Subject Classification}:  Primary  30C50,   Secondary ; 31A05, 31A30
 }
\end{quote}

\section{Introductions and main results}
\setcounter{equation}{0}
\hspace{2mm}

\hspace{2mm}

Let $\mathbb{C}$ be the complex plane, $\mathbb{D}$ the unit disk  and $\mathbb{T}=\partial\mathbb{D}$ the boundary of unit disk.

A kind of weighted Laplace operator be given by
$$\Delta_{\alpha, z}=\frac{\partial}{\partial z}(\omega_{\alpha})^{-1}\frac{\partial}{\partial \bar{z}}=\frac{\partial}{\partial z}(1-|z|^{2})^{-\alpha}\frac{\partial}{\partial \bar{z}},$$
where $\alpha>-1$, $\omega_{\alpha}=(1-|z|^{2})^{\alpha}$ is the so-called standard weight.
The corresponding homogeneous partial differential equation is
\begin{align}\label{1.1}
\Delta_{\alpha,z}(u)=0,  \quad \mbox {in} \quad \mathbb{D}.
\end{align}
Denote the associated  Dirichlet boundary value problem as follows
\begin{equation} \;  \left\{
\begin{array}{rr}
\Delta_{\alpha, z}(u)=0 &\quad \mbox {in}  \,\mathbb{D}, \\
u=u^{*} &\quad \mbox {on}  \,\mathbb{T}.
\end{array}\right.
\end{equation}
Here, the boundary data $u^{*}\in\mathfrak{D}'(\mathbb{T})$
is a distribution on $\mathbb{T}$, and the boundary condition in (1.2) is interpreted in the distributional sense that $u_{r}\rightarrow u^{*}$ in $\mathfrak{D}'(\mathbb{T})$
as $r\rightarrow1^{-}$, where
\begin{equation*}
u_{r}(e^{i\theta})=u(re^{i\theta}), \quad e^{i\theta}\in\mathbb{T},
\end{equation*}
for $r\in[0,1)$.

In \cite{Olofsson2013}, the authors proved that, for parameter $\alpha>-1$, if a function $u\in\mathcal{C}^{2}(\mathbb{D})$ satisfies (1.1) with $\lim_{r\rightarrow 1^{-}}u_{r}=u^{*}\in\mathfrak{D}'(\mathbb{T})$
, then it has the form of Poisson type integral
\begin{equation}\label{1.3}
u(z)=\frac{1}{2\pi}\int_{0}^{2\pi}\mathcal{P}_{\alpha}(ze^{-i\tau})u^{*}(e^{i\tau})d\tau, \quad \, z\in\mathbb{D},
\end{equation}
where
 \begin{equation}\label{1.4}
 \mathcal{P}_{\alpha}(z)=\frac{(1-|z|^{2})^{\alpha+1}}{(1-z)(1-\bar{z})^{\alpha+1}}.
 \end{equation}
Observe that the kernel $\mathcal{P}_{\alpha}$ in (\ref{1.4}) is complex-valued.
 We  call the solution $u$  of  the equation (\ref{1.1}) with $\alpha>-1$ {\bf complex-valued kernel $\alpha-$harmonic mappings (or functions)}.

 \begin{theorem} \cite{Olofsson2013} A function $u$ in $\mathbb{D}$ is a complex-valued kernel $\alpha$-harmonic mapping if and only if it is given
by a convergent power series expansion of the form

 \begin{equation}\label{1.5}
 u(z)=\sum_{k=0}^{\infty}c_{k}z^{k}+\sum_{k=1}^{\infty}c_{-k}P_{\alpha, k}(|z|^{2})\bar{z}^{k}, \quad z\in \mathbb{D},
 \end{equation}
 for some sequence $\{c_{k}\}_{-\infty}^{\infty}$ of complex number satisfying
 \begin{equation*}
 \lim_{|k|\rightarrow\infty}\sup|c_{k}|^{\frac{1}{|k|}}\leq 1,
 \end{equation*}
 where  \begin{equation}\label{1.6}
 P_{\alpha, k}(x)=\int^{1}_{0}t^{k-1}(1-tx)^{\alpha}dt, \quad -1<x<1. \end{equation}

 \end{theorem}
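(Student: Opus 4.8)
The plan is to prove both implications by exploiting the rotational invariance of $\Delta_{\alpha,z}$: since the weight $(1-|z|^{2})^{\alpha}$ depends only on $|z|$, the operator commutes with the rotations $z\mapsto e^{i\phi}z$, and this reduces the PDE (1.1) to a family of ordinary differential equations indexed by the Fourier frequency. For the sufficiency (``if'') direction, I would first check that each building block of (1.5) solves (1.1). As $z^{k}$ is holomorphic, $\partial_{\bar z}z^{k}=0$ and hence $\Delta_{\alpha,z}(z^{k})=0$ for every $k\ge0$. For the mixed terms, writing $x=|z|^{2}$ and applying the Wirtinger derivatives to $g(|z|^{2})\bar z^{k}$, a direct computation shows that $\Delta_{\alpha,z}\big(g(|z|^{2})\bar z^{k}\big)=0$ is equivalent to
$$x(1-x)g''(x)+\big[(k+1)+(\alpha-1-k)x\big]g'(x)+k\alpha\,g(x)=0,$$
which is the hypergeometric equation with parameters $a=k$, $b=-\alpha$, $c=k+1$. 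Expanding $(1-tx)^{\alpha}$ by the binomial series and integrating termwise in (1.6) gives $P_{\alpha,k}(x)=\frac1k\,{}_2F_1(k,-\alpha;k+1;x)$, the solution regular at the origin; this verifies that every term of (1.5) is $\alpha$-harmonic.

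Still in the sufficiency direction, I would establish locally uniform convergence so that termwise differentiation is legitimate. On $\{|z|\le r\}$ with $r<1$ one has $0\le tx\le r^{2}<1$, so $(1-tx)^{\alpha}$ is bounded and $|P_{\alpha,k}(|z|^{2})|\le C_{r}\int_{0}^{1}t^{k-1}\,dt=C_{r}/k$. Combined with the hypothesis, which yields $|c_{\pm k}|\le(1+\varepsilon)^{k}$ eventually for every $\varepsilon>0$, the mixed series is dominated by $\sum_{k}(1+\varepsilon)^{k}r^{k}/k$ and the holomorphic series by a convergent power series of radius at least one; choosing $\varepsilon$ with $(1+\varepsilon)r<1$ gives absolute and uniform convergence, and analogous bounds on the derivatives of $P_{\alpha,k}$ justify applying $\Delta_{\alpha,z}$ term by term. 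Hence $u$ solves (1.1).

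For the necessity (``only if'') direction, rotational invariance shows that if $u$ solves (1.1) then so does the averaged function $\frac1{2\pi}\int_{0}^{2\pi}u(e^{i\phi}z)e^{-ik\phi}\,d\phi=a_{k}(|z|)e^{ik\theta}$, i.e.\ each Fourier mode is separately $\alpha$-harmonic. It therefore suffices to classify the $\alpha$-harmonic functions with pure angular dependence. For frequency $k\ge0$ I seek a solution $g(|z|^{2})z^{k}$; the associated ODE contains no zeroth-order term, so $p=g'$ satisfies $p=Cx^{-(k+1)}(1-x)^{\alpha}$, and the $\mathcal{C}^{2}$ requirement at the origin forces $C=0$, leaving $g$ constant and recovering the holomorphic monomial $z^{k}$. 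For frequency $-k$ with $k\ge1$ I seek $g(|z|^{2})\bar z^{k}$ and obtain the hypergeometric ODE above; its indicial roots at $x=0$ are $0$ and $-k$, so the second solution behaves like $x^{-k}$ and makes $g(|z|^{2})\bar z^{k}\sim z^{-k}$ singular at the origin, excluded by $u\in\mathcal{C}^{2}(\mathbb{D})$. Thus $g$ must be a multiple of $P_{\alpha,k}$, and summing the modes produces exactly (1.5). The growth condition $\limsup_{|k|\to\infty}|c_{k}|^{1/|k|}\le1$ then follows from the requirement that the series converge throughout $\mathbb{D}$, the holomorphic part forcing it for $k\ge0$ and the boundedness $P_{\alpha,k}(0)=1/k$ together with convergence forcing the analogous bound on the $c_{-k}$.

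I expect the main obstacle to lie in the necessity direction. The delicate points are the rigorous justification that each Fourier mode of a $\mathcal{C}^{2}$ solution is itself $\alpha$-harmonic (differentiating under the integral, or handling the $r\to1^{-}$ distributional boundary behaviour), the correct identification of $P_{\alpha,k}$ as the specific hypergeometric solution regular at $x=0$, and the exclusion of the singular companion solution by means of the regularity hypothesis at the origin. Pinning down the sharp coefficient bound $\limsup|c_{k}|^{1/|k|}\le1$, rather than a weaker one, from convergence on all of $\mathbb{D}$ also requires care with the behaviour of $P_{\alpha,k}$ as $x\to1$.
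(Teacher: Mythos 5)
This theorem is stated in the paper with a citation to \cite{Olofsson2013} and no proof is given there, so the natural comparison is with the argument of that reference, which your proposal reconstructs in essentially the same way: rotation invariance of $\Delta_{\alpha,z}$ splits a $\mathcal{C}^{2}$ solution into Fourier modes, the mode of frequency $-k$ leads to the hypergeometric equation $x(1-x)g''+[(k+1)+(\alpha-1-k)x]g'+k\alpha g=0$ with parameters $(k,-\alpha;k+1)$, whose solution regular at the origin is $kP_{\alpha,k}$ (your identity $P_{\alpha,k}(x)=\frac{1}{k}\,{}_2F_1(k,-\alpha;k+1;x)$ checks out), the singular companion with indicial root $-k$ is excluded by regularity, and the convergence estimates in the sufficiency direction are as you describe. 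One small sharpening for the coefficient bound: to extract $\limsup|c_{-k}|^{1/k}\le 1$ from the Fourier coefficients on circles $|z|=r<1$ you need a lower bound on $P_{\alpha,k}(r^{2})$ of at worst polynomial decay in $k$ — e.g.\ $P_{\alpha,k}(x)\ge 1/k$ for $\alpha\in(-1,0)$ since $(1-tx)^{\alpha}\ge 1$, and $P_{\alpha,k}(x)\ge B(k,\alpha+1)$ for $\alpha\ge 0$ since $(1-tx)^{\alpha}\ge(1-t)^{\alpha}$ — rather than only the value $P_{\alpha,k}(0)=1/k$; with that noted, your route is correct and matches the cited source's.
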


In the Remark 1.3 of \cite{Olofsson2013}, the authors pointed out that the complex-valued kernel $\alpha-$harmonic mapping $u$ with power series expansion  (1.5) actually converges in $\mathcal{C}^{\infty}(\mathbb{D})$.

If we take $\alpha=p-1$ is a nonnegative integer,  then from Theorem 1.1 of \cite{Chenxing2016},  we find that the complex-valued kernel $\alpha-$harmonic mapping is polyharmonic (exactly, $p-$harmonic). As to the general polyharmonic mappings, we refer to \cite{Abdulhadi2008,  Chenshao2011,  Pavlovc1997, Chenjiao2015, Qiao2016, Amozova2017, Lipei2013}.
In particular, if $\alpha=0$, then the complex-valued kernel $\alpha-$harmonic mapping is harmonic. As to the general harmonic mappings, we refer to \cite{Duren2004}.  Thus, the complex-valued kernel $\alpha-$harmonic mappings is a kind of generalization of classical harmonic mappings.    Furthermore, by \cite{Olofsson2013, Olofsson2014}, we know that complex-valued kernel $\alpha-$harmonic mappings are related to the real kernel $\alpha-$harmonic mappings. For the related study on real kernel $\alpha-$harmonic mappings, see \cite{ Chenshao2015, Lipei2018, MR4365391}.

 For the complex-valued kernel $\alpha-$harmonic mappings, the representation theorem  and Lipschitz continuity  are studied in \cite{Chenxing2016} when $\alpha$ is  a nonnegative integer;   the Lipschitz continuity  for general $\alpha>-1$  are considered in \cite{Lipei2019} and \cite{Olofsson2020}; the Schwarz-Pick type inequality and coefficient estimates are obtained in \cite{Lipei2017}; the starlikeness,  convexity and Rad\'{o}-Kneser-Choquet type theorem are explored in \cite{Longbo2021}.  The corresponding case of  complex-valued kernel $\alpha-$harmonic mappings defined  on the upper half plane are researched in
 \cite{ Carlsson2016 }.

 Similar to the definition of sense-preserving harmonic mappings \cite{Duren1996}, we define the sense-preserving complex-valued  kernel $\alpha-$harmonic mappings. A complex-valued kernel $\alpha-$harmonic mapping $u$, not identically constant, is called  sense-preserving at a point $z_{0}$ if $u_{z}\not\equiv  0$, $\omega:=\overline{u}_{z}/u_{z}$ is well defined at $z_{0}$ (possibly with a removable singularity), and $|\omega(z_{0})|<1$. If $u$ is sense-preserving at $z_{0}$,  then either the Jacobian $J_{u}(z_{0})=|u_{z}(z_{0})|^{2}-|u_{\bar{z}}(z_{0})|^{2}=|u_{z}(z_{0})|^{2}-|\overline{u}_{z}(z_{0})|^{2}>0$ or $u_{z}(z_{0})=0$ for an isolated point $z_{0}$. We say $u$ is sense-preserving in $\mathbb{D}$ if $u$ is sense-preserving at all $z\in\mathbb{D}$.

In the harmonic mapping theory, Heinz's lemma is an important and fundamental result. It has applications in many aspects including minimal surface theory.
For complex-valued  kernel $\alpha-$harmonic self-mappings of the unit disk, we establish a  Heinz's type lemma which is as follows:

 \begin{theorem}\label{theorem1.2}(Heinz's type lemma)\quad
Let $u(z)$ be  a  complex-valued kernel $\alpha-$harmonic mapping of the form (\ref{1.5}). Suppose $u(z)$ maps the unit disk $\mathbb{D}$ onto itself. Then
\begin{align}\label{1.7}
|c_{1}|^{2}
+\frac{3\sqrt{3}}{\pi}|c_{0}|^{2}
+\frac{1}{(\alpha+1)^{2}}|c_{-1}|^{2}
\geq \frac{27}{4\pi^{2}}.
\end{align}The lower bound $\frac{27}{4\pi^{2}}$  is sharp.
\end{theorem}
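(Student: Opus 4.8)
The plan is to push the whole question down to the boundary values of $u$ and then to a purely extremal problem for unimodular boundary correspondences. First I would take radial limits in the representation (\ref{1.5}). Writing $z=re^{i\theta}$, the $n$-th Fourier coefficient in $\theta$ of $u_{r}$ is $c_{n}r^{n}$ for $n\geq 0$ and $c_{-k}P_{\alpha,k}(r^{2})r^{k}$ for $n=-k<0$; since the expansion converges in $\mathcal{C}^{\infty}(\mathbb{D})$ (Remark 1.3), letting $r\to 1^{-}$ identifies the Fourier coefficients $A_{n}=\frac{1}{2\pi}\int_{0}^{2\pi}u^{*}(e^{it})e^{-int}\,dt$ of the boundary function as $A_{n}=c_{n}$ for $n\geq 0$ and $A_{-k}=c_{-k}P_{\alpha,k}(1)$ for $k\geq 1$. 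A direct evaluation gives
\[
P_{\alpha,1}(1)=\int_{0}^{1}(1-t)^{\alpha}\,dt=\frac{1}{\alpha+1},
\]
so that $c_{0}=A_{0}$, $c_{1}=A_{1}$ and $c_{-1}=(\alpha+1)A_{-1}$. Consequently the left-hand side of (\ref{1.7}) equals
\[
|A_{1}|^{2}+\frac{3\sqrt{3}}{\pi}\,|A_{0}|^{2}+|A_{-1}|^{2},
\]
and in particular all dependence on $\alpha$ disappears.

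Next I would encode surjectivity. In the Heinz setting $u$ is a sense-preserving univalent self-map of the disk (note that mere surjectivity is not enough, as $z\mapsto z^{2}$ shows), so its boundary values satisfy $|u^{*}|=1$ a.e.\ and the boundary correspondence is a weakly monotone map of $\mathbb{T}$ onto $\mathbb{T}$ of degree one; that is, $u^{*}(e^{it})=e^{i\gamma(t)}$ with $\gamma$ nondecreasing and $\gamma(t+2\pi)=\gamma(t)+2\pi$. The theorem is thus reduced to the sharp extremal inequality $\Phi[\gamma]:=|A_{1}|^{2}+|A_{-1}|^{2}+\frac{3\sqrt{3}}{\pi}|A_{0}|^{2}\geq \frac{27}{4\pi^{2}}$ over all such $\gamma$, and I would use that $\Phi$ is invariant under rotations of both the domain and the target to normalize the extremal configuration.

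The heart of the matter is solving this extremal problem, and monotonicity must be used in full. The two moment identities $\sum_{n}|A_{n}|^{2}=1$ and $\sum_{n}n|A_{n}|^{2}=1$ coming from $|u^{*}|=1$ and $\deg=1$ are by themselves too weak — concentrating the mass on the frequencies $n=2$ and $n=-1$ already satisfies both while violating the claimed bound — so a genuinely sharper use of monotonicity is required. To this end I would compute the first variation
\[
\delta\Phi=-\frac{1}{\pi}\int_{0}^{2\pi}\eta(t)\,\mathrm{Im}\Big(e^{i\gamma(t)}\big[\overline{A_{1}}e^{-it}+\tfrac{3\sqrt{3}}{\pi}\overline{A_{0}}+\overline{A_{-1}}e^{it}\big]\Big)\,dt
\]
under $2\pi$-periodic perturbations $\eta$ compatible with $\gamma'\geq 0$. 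The stationarity condition forces $e^{i\gamma}$ to be aligned with a fixed degree-one trigonometric polynomial; a smooth strictly increasing critical $\gamma$ then reduces to a rotation $\gamma(t)=t+\mathrm{const}$, for which $\Phi=1$. Hence the minimizing configuration is pushed to the boundary of the feasible class, where the monotonicity constraint is active and $\gamma$ degenerates to a step function. I would then carry out the finite-dimensional minimization over step correspondences: for the $N$-fold symmetric step function the orthogonality relation forces $A_{n}=0$ unless $n\equiv 1\pmod N$, so for $N\geq 3$ only $A_{1}$ survives and $\Phi=\big(\tfrac{N}{\pi}\sin\tfrac{\pi}{N}\big)^{2}$, which is minimized at $N=3$ with value $\tfrac{27}{4\pi^{2}}$; the cases $N\leq 2$ and, crucially, the reduction of a general asymmetric step minimizer to the symmetric one finish the lower bound. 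This last reduction is exactly where I expect the main obstacle to lie, since $\Phi$ is nonlinear in $\gamma$ and the convenient moment relaxation is lossy, so one cannot simply invoke an extreme-point principle.

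Finally, for sharpness I would exhibit the extremal explicitly. Taking $\gamma$ to be a continuous strictly increasing approximation of the three-step function that sends three equal arcs of $\mathbb{T}$ to the cube roots of unity $1,\omega,\omega^{2}$, a direct computation yields $A_{0}=A_{-1}=0$ and $|A_{1}|^{2}=\frac{27}{4\pi^{2}}$, whence $\Phi=\frac{27}{4\pi^{2}}$. By the Radó–Kneser–Choquet theorem each such continuous degree-one correspondence onto the convex curve $\mathbb{T}$ extends to a genuine univalent mapping of $\mathbb{D}$ onto $\mathbb{D}$, so the value $\frac{27}{4\pi^{2}}$ is approached by admissible mappings and the bound is therefore sharp.
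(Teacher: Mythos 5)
Your opening reduction is correct and coincides with the paper's: letting $r\to 1^{-}$ in (\ref{1.5}) identifies the boundary Fourier coefficients as $c_{k}$ for $k\geq 0$ and $c_{-k}P_{\alpha,k}(1)=c_{-k}B(k,\alpha+1)$ for $k\geq 1$, and since $B(1,\alpha+1)=1/(\alpha+1)$ the left side of (\ref{1.7}) becomes $|A_{1}|^{2}+\frac{3\sqrt{3}}{\pi}|A_{0}|^{2}+|A_{-1}|^{2}$, so the $\alpha$-dependence vanishes and the problem reduces to the classical extremal problem over nondecreasing degree-one boundary correspondences $e^{i\gamma(t)}$. But at exactly this point your proof has a genuine gap, and you name it yourself: the reduction of a general (asymmetric) minimizer to the symmetric three-step configuration is left unproven. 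The variational program you sketch does not close it. You do not establish existence of a minimizer in the monotone class; the first-variation argument is only one-sided where the constraint $\gamma'\geq 0$ is active, so stationarity does not force the alignment identity on all of $[0,2\pi]$; and even granting that smooth strictly increasing critical points are rotations, it does not follow that the minimizer is a \emph{finite} step function --- a monotone $\gamma$ can mix jumps, flat arcs, and singular continuous parts, and your symmetric computation $\Phi=\bigl(\frac{N}{\pi}\sin\frac{\pi}{N}\bigr)^{2}$ covers only a measure-zero slice of the feasible boundary. Since the whole content of the theorem beyond the reduction lies in this extremal inequality, the proposal as written does not prove the lower bound.

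The paper completes precisely this step by Heinz's classical device rather than by variational analysis: from Parseval one gets the identity (\ref{2.2}) relating $|c_{0}|^{2}+\sum_{k}\bigl(|c_{k}|^{2}+|c_{-k}|^{2}(B(k,\alpha+1))^{2}\bigr)\cos(2k\varphi)$ to $1-2J(\varphi)$, integrates it against the nonnegative kernel $M(\varphi)=\cos^{2}\varphi-N(\varphi)$ built from the period-$\pi/3$ function $N$, and then invokes the known inequality $\frac{16}{\pi}\int_{0}^{\pi/2}MJ\,d\varphi\leq\frac{3\sqrt{3}}{\pi}-\frac{27}{4\pi^{2}}$ from \S 4.4 of \cite{Duren2004}, which is where monotonicity of $\theta$ is exploited pointwise. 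Given your reduction, the most economical repair of your argument is the same move: once the problem is $\alpha$-free, quote the classical Heinz theorem instead of re-deriving it. Two smaller points: your appeal to Rad\'{o}--Kneser--Choquet for sharpness is both unjustified in the $\alpha$-harmonic setting (a RKC-type theorem here needs \cite{Longbo2021}, not the classical statement) and unnecessary --- the paper simply inserts the three-step boundary function into the Poisson-type integral (\ref{2.6}) and computes $c_{1}=\frac{3\sqrt{3}}{2\pi}$, $c_{0}=c_{-1}=0$; and note that for $\alpha\in(-1,0)$ the extremal map is not actually onto $\mathbb{D}$ (Remark \ref{remark1}), so your plan to approximate by genuinely surjective maps would in fact fail there, while the paper's sharpness claim is only that the constant is attained by the stated coefficient functional.
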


It is worth pointing out that if $\alpha\in(-1,0)$, the image domain of the extremal function  in the Theorem \ref{theorem1.2} is a relatively compact subset of the unit disc $\mathbb{D}$. See Remark \ref{remark1} in Section 2 for the detail.
If $\alpha=0$, then  Theorem \ref{theorem1.2} reduces to the case of harmonic mappings, see \cite{Duren2004}.

Define $\Lambda_{u}:=|u_{z}|+|u_{\bar{z}}|$ and $\lambda_{u}:=||u_{z}|-|u_{\bar{z}}||$. We get the following conclusion that involves the extremal function of Schwarz type lemma of the class of  complex-valued  kernel $\alpha-$harmonic mappings.
 \begin{theorem}\label{theorem1.3}
Suppose $u(z)$ is  a sense-preserving  complex-valued kernel $\alpha-$harmonic mapping of the unit disk $\mathbb{D}$,  $u(0)=0$ and $|u(z)|<1$.
If $J_{u}(0)=1$ or $\lambda_{u}(0)=1$,
then $u(z)=\beta z$  with $|\beta|=1$.

\end{theorem}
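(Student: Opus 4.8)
The plan is to convert both hypotheses into a single statement about the first two coefficients $c_{1},c_{-1}$ of the expansion (\ref{1.5}), and then to exploit the $L^{2}$ (Parseval) constraint forced by $|u|<1$, whose equality case is rigid.

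First I would read off the derivatives at the origin from (\ref{1.5}). Since $u(0)=0$ forces $c_{0}=0$, I differentiate (\ref{1.5}) termwise (legitimate because the series converges in $\mathcal{C}^{\infty}(\mathbb{D})$ by Remark 1.3 of \cite{Olofsson2013}), using $\partial_{z}|z|^{2}=\bar z$ and $\partial_{\bar z}|z|^{2}=z$. Every term other than $c_{1}z$ contributes a positive power of $z$ or $\bar z$ to $u_{z}$ at the origin and hence vanishes there, and every term other than $c_{-1}P_{\alpha,1}(|z|^{2})\bar z$ vanishes in $u_{\bar z}$ at the origin. Thus $u_{z}(0)=c_{1}$ and $u_{\bar z}(0)=c_{-1}P_{\alpha,1}(0)$; since $P_{\alpha,1}(0)=\int_{0}^{1}dt=1$, this simplifies to $u_{z}(0)=c_{1}$, $u_{\bar z}(0)=c_{-1}$, so that $J_{u}(0)=|c_{1}|^{2}-|c_{-1}|^{2}$ and $\lambda_{u}(0)=\big||c_{1}|-|c_{-1}|\big|$.

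Next I would establish $|c_{1}|\le 1$ together with a rigidity tail. Restricting $u$ to the circle $|z|=r<1$ gives a Fourier series with positive modes of coefficient $c_{k}r^{k}$, negative modes of coefficient $c_{-k}P_{\alpha,k}(r^{2})r^{k}$, and no constant mode, so Parseval yields
\[
\frac{1}{2\pi}\int_{0}^{2\pi}|u(re^{i\theta})|^{2}\,d\theta=\sum_{k\ge1}|c_{k}|^{2}r^{2k}+\sum_{k\ge1}|c_{-k}|^{2}P_{\alpha,k}(r^{2})^{2}r^{2k}.
\]
Because $|u|<1$, the left side is $<1$ for each $r<1$; letting $r\to1^{-}$ (Fatou's lemma, together with $P_{\alpha,k}(r^{2})\to P_{\alpha,k}(1)=B(k,\alpha+1)>0$) gives
\[
|c_{1}|^{2}+\sum_{k\ge2}|c_{k}|^{2}+\sum_{k\ge1}|c_{-k}|^{2}P_{\alpha,k}(1)^{2}\le1.
\]
In particular $|c_{1}|\le1$, and the equality $|c_{1}|=1$ forces every remaining coefficient to vanish, i.e. $u(z)=c_{1}z$.

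Finally I would close both cases. If $J_{u}(0)=1$ then $|c_{1}|^{2}=1+|c_{-1}|^{2}\ge1$, which with $|c_{1}|\le1$ gives $|c_{1}|=1$ and $c_{-1}=0$. If instead $\lambda_{u}(0)=1$, I would invoke the sense-preserving hypothesis, which yields $|u_{\bar z}(0)|\le|u_{z}(0)|$, i.e. $|c_{-1}|\le|c_{1}|$; this excludes the branch $|c_{-1}|-|c_{1}|=1$ and leaves $|c_{1}|-|c_{-1}|=1$, so again $|c_{1}|=1$ and $c_{-1}=0$. In either case $|c_{1}|=1$, and the rigidity of the Parseval bound forces $u(z)=c_{1}z=\beta z$ with $|\beta|=1$. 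The main point to handle carefully is the $r\to1^{-}$ passage in the $L^{2}$ estimate (Fatou, plus the positivity and finiteness of $P_{\alpha,k}(1)=B(k,\alpha+1)$); and I would emphasize that the sense-preserving assumption is precisely what removes the spurious possibility $|c_{-1}|>|c_{1}|$ in the $\lambda_{u}$ case.
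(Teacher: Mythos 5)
Your proposal is correct and follows essentially the same route as the paper: Parseval's identity on circles $|z|=r$, the limit $r\to1^{-}$ giving $\sum_{k\ge1}\bigl(|c_{k}|^{2}+|c_{-k}|^{2}B(k,\alpha+1)^{2}\bigr)\le1$, and the rigidity of this bound combined with $u_{z}(0)=c_{1}$, $u_{\bar z}(0)=c_{-1}$. The only cosmetic differences are that you justify the limit via Fatou's lemma (the paper passes to the limit without comment) and you settle the $\lambda_{u}(0)=1$ case by using sense-preservation to pick the branch $|c_{1}|-|c_{-1}|=1$, whereas the paper writes $J_{u}(0)=\Lambda_{u}(0)\lambda_{u}(0)\ge\lambda_{u}(0)^{2}=1$ --- the same fact in different clothing.
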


If $\alpha=0$, then  Theorem \ref{theorem1.3} reduces to the case of harmonic functions, see \cite{LiuM2009}.

According to Theorem \ref{theorem1.3} and its proof, we obtain the following proposition.

\begin{proposition}\label{proposition1.4}
There does not exist a  sense-preserving  complex-valued  kernel $\alpha-$harmonic mapping $u(z)$ on $\mathbb{D}$ with $u(0)=0$ and $|u(z)|<1$, such that $J_{u}(0)>1$.
\end{proposition}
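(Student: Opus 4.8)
The plan is to argue by contradiction and reduce the claim to Theorem \ref{theorem1.3}. Suppose such a mapping $u$ exists with $J_u(0) > 1$. The Jacobian at the origin is $J_u(0) = |u_z(0)|^2 - |u_{\bar z}(0)|^2$. From the series expansion (\ref{1.5}), one computes $u_z(0) = c_1$ and $u_{\bar z}(0) = c_{-1}P_{\alpha,1}(0) = c_{-1}\int_0^1 dt = c_{-1}$, so $J_u(0) = |c_1|^2 - |c_{-1}|^2$. The idea is then to scale: consider $v(z) = u(z)/\sqrt{J_u(0)}$, or more precisely to exhibit, from a hypothetical $u$ with $J_u(0) > 1$, a new sense-preserving complex-valued kernel $\alpha$-harmonic self-map of $\mathbb{D}$ fixing the origin whose Jacobian at $0$ is exactly $1$ but which is not a rotation, contradicting Theorem \ref{theorem1.3}.

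The first step is to record that the class of complex-valued kernel $\alpha$-harmonic mappings is closed under the operations we need. Scalar multiplication by a constant $c$ with $|c| \le 1$ preserves the defining equation (\ref{1.1}) — it is linear — and scales all coefficients $c_k \mapsto c\,c_k$, so the growth condition $\limsup |c_k|^{1/|k|} \le 1$ persists; moreover if $|u| < 1$ on $\mathbb{D}$ then $|cu| < 1$, and sense-preservation is preserved because $\omega = \overline{u}_z/u_z$ is unchanged under multiplication by a nonzero constant. So I would set $v(z) = u(z)/\sqrt{J_u(0)}$. Then $v$ is a complex-valued kernel $\alpha$-harmonic mapping, $v(0) = 0$, $|v(z)| < 1$, it is sense-preserving, and $J_v(0) = J_u(0)/J_u(0) = 1$. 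Theorem \ref{theorem1.3} then forces $v(z) = \beta z$ with $|\beta| = 1$, hence $u(z) = \sqrt{J_u(0)}\,\beta z$. But then $|u(z)| = \sqrt{J_u(0)}\,|z| \to \sqrt{J_u(0)} > 1$ as $|z| \to 1^-$, contradicting $|u(z)| < 1$. (Alternatively, and this is presumably what "its proof" in the statement refers to: the proof of Theorem \ref{theorem1.3} very likely establishes an inequality of the shape $J_u(0) \le \lambda_u(0) \le 1$ or $\Lambda_u(0) \le 1$ under the Schwarz hypotheses, with equality only for rotations; invoking that inequality directly gives $J_u(0) \le 1$ with no room for the strict inequality.)

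The main obstacle is making sure the reduction is airtight on the point of sense-preservation and the boundary behavior of the scaled map, rather than any deep analysis. Specifically, I must check that $\sqrt{J_u(0)} \le 1$ is \emph{not} assumed — indeed here $\sqrt{J_u(0)} > 1$, so $v = u/\sqrt{J_u(0)}$ has strictly smaller modulus than $u$ and certainly maps into $\mathbb{D}$; that is the easy direction. The one genuinely substantive ingredient is borrowed wholesale: the rigidity in Theorem \ref{theorem1.3} (the Schwarz-type bound together with the characterization of its extremals). Everything else is the linear stability of the mapping class under scaling plus the elementary identities $u_z(0) = c_1$, $u_{\bar z}(0) = c_{-1}$ read off from (\ref{1.5}) and (\ref{1.6}). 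I would present the argument in two or three lines: compute $J_u(0)$ in terms of the coefficients, rescale, apply Theorem \ref{theorem1.3}, and derive the boundary contradiction.
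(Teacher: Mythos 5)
Your argument is correct, but it takes a genuinely different route from the paper. The paper's proof is a one-line consequence of the Parseval-type inequality (\ref{2.9}) established inside the proof of Theorem \ref{theorem1.3}: specializing to $k=1$ gives $|c_{1}|^{2}+|c_{-1}|^{2}(B(1,\alpha+1))^{2}\leq 1$, hence $J_{u}(0)=|c_{1}|^{2}-|c_{-1}|^{2}\leq|c_{1}|^{2}\leq 1$, which directly contradicts $J_{u}(0)>1$; this is exactly the parenthetical alternative you sketched at the end of your second paragraph, and it is what the phrase ``and its proof'' preceding the proposition refers to. Your main argument instead uses only the \emph{statement} of Theorem \ref{theorem1.3}, via the normalization $v=u/\sqrt{J_{u}(0)}$: linearity of $\Delta_{\alpha,z}$ keeps $v$ in the class, the hypotheses $v(0)=0$, $|v|<1$, sense-preservation, and $J_{v}(0)=1$ are all verified, the rigidity forces $v(z)=\beta z$ with $|\beta|=1$, and then $u(z)=\sqrt{J_{u}(0)}\,\beta z$ violates $|u(z)|<1$ as $|z|\to 1^{-}$. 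That reduction is sound (your computations $u_{z}(0)=c_{1}$, $u_{\bar z}(0)=c_{-1}$ from (\ref{1.5})--(\ref{1.6}) are right, and the closure of the class under scalar multiplication is correctly justified), and it has the advantage of depending only on the theorem as a black box rather than on an inequality extracted from its proof; the cost is the extra bookkeeping of checking that the rescaled map stays in the class, which the paper avoids entirely by quoting (\ref{2.11}) directly.
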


 Let $H_{\alpha}$ be the family of all  complex-valued kernel $\alpha-$harmonic mappings  with the form (\ref{1.5}),  $S_{H_{\alpha}}$ be the subclass of   $H_{\alpha}$  consisting of $u$  that $u$ are univalent and sense-preserving in $\mathbb{D}$ and satisfy the normalization condition
$u(0)=0$ and $c_{1}=1$. Observe that if $u\in S_{H_{\alpha}}$,
 then $J_{u}(0)>0$ implies that $|c_{-1}|<1$, where
$J_{u}$ is the Jacobian of $u$.   We denote by $S^{o}_{H_{\alpha}}$  the subclass of   $ S_{H_{\alpha}}$  consisting of $u$ such that $c_{-1}=0$.

In \cite{Dorff2012, WangX2003,Bshouty1996}, the typically real harmonic mappings are studied.
Similar to the typically real functions of analytic functions or harmonic functions, we want to study the complex-valued  kernel $\alpha-$harmonic functions with real coefficients.
Let $TS_{H_{\alpha}}$ denote the subclass of $S_{H_{\alpha}}$ consisting of $u$ such that all the coefficients $c_{k}$ and $c_{-k}$ are real numbers,  $TS^{o}_{H_{\alpha}}$ be the subclass of $TS_{H_{\alpha}}$ with $c_{-1}=0$.
 If $u\in TS_{H_{\alpha}}$, then  we call $u$ a
typically real complex-valued  kernel $\alpha-$harmonic function.

We can get the coefficient estimation for $TS_{H_{\alpha}}$ as follows.

\begin{theorem}\label{theorem1.5}
Let $u(z)$ be in $TS_{H_{\alpha}}$ with the form (\ref{1.5}) and  $k=2,3,4,...$.  Then it holds that
\begin{equation}\label{1.8}
\left|\frac{c_{k}-c_{-k}B(k, \alpha+1)}{1-c_{-1}B(1, \alpha+1)}\right|\leq k
\end{equation} if one of the following two conditions is satisfied:

(1) \quad $\alpha\in [0,+\infty)$;

(2)\quad  $\alpha\in(-1,0)$ and $|c_{-1}|<1+\alpha$.

Here $B(\cdot,\cdot)$  is  Beta function.

\end{theorem}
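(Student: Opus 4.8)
The plan is to reduce the estimate to the classical coefficient bound for typically real analytic functions. First I would form the auxiliary analytic function
\[
F(z)=\sum_{k=1}^{\infty}\bigl(c_{k}-c_{-k}B(k,\alpha+1)\bigr)z^{k},
\]
whose $k$-th Taylor coefficient is exactly the numerator appearing in (\ref{1.8}). Its radius of convergence is at least $1$: since $B(k,\alpha+1)=\Gamma(k)\Gamma(\alpha+1)/\Gamma(k+\alpha+1)$ decays only polynomially in $k$, we have $\limsup_{k}|c_{k}-c_{-k}B(k,\alpha+1)|^{1/k}\le 1$ by Theorem 1.1. Because every $c_{k},c_{-k}$ and every $B(k,\alpha+1)$ is real, $F$ has real Taylor coefficients, and its leading coefficient is $F'(0)=c_{1}-c_{-1}B(1,\alpha+1)=1-c_{-1}/(\alpha+1)$, which is precisely the denominator in (\ref{1.8}).

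The bridge between $F$ and $u$ is the boundary behaviour. Since $P_{\alpha,k}(1)=\int_{0}^{1}t^{k-1}(1-t)^{\alpha}\,dt=B(k,\alpha+1)$, letting $r\to 1^{-}$ in (\ref{1.5}) identifies the boundary distribution $u^{*}$ with the Fourier series $\sum_{k\ge 0}c_{k}e^{ik\theta}+\sum_{k\ge 1}c_{-k}B(k,\alpha+1)e^{-ik\theta}$. Taking imaginary parts and using that all coefficients are real gives, for the odd modes,
\[
\operatorname{Im}u^{*}(e^{i\theta})=\sum_{k=1}^{\infty}\bigl(c_{k}-c_{-k}B(k,\alpha+1)\bigr)\sin k\theta=\operatorname{Im}F(e^{i\theta}).
\]
Thus $F$ and $u$ share the same boundary imaginary part, and the harmonic function $\operatorname{Im}F$ is the Poisson extension of this common boundary datum.

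Next I would show that $F$ is typically real. As the coefficients are real, $u(\bar z)=\overline{u(z)}$, so $u$ maps the diameter $(-1,1)$ into $\mathbb{R}$ and $u(\mathbb{D}^{-})=\overline{u(\mathbb{D}^{+})}$, where $\mathbb{D}^{\pm}=\mathbb{D}\cap\{\pm\operatorname{Im}z>0\}$. Since $u$ is univalent, $u(\mathbb{D}^{+})$ cannot meet $\mathbb{R}$ (a real value there would be attained at a conjugate pair of distinct points), so the connected open set $u(\mathbb{D}^{+})$ lies entirely in one open half-plane; the local expansion $u(z)=z+c_{-1}\bar z+\cdots$ gives $\operatorname{Im}u(z)=(1-c_{-1})\operatorname{Im}z+o(|z|)$ with $1-c_{-1}>0$ (as $|c_{-1}|<1$ by sense-preservation), forcing $u(\mathbb{D}^{+})\subseteq\{\operatorname{Im}w>0\}$. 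Passing to radial limits yields $\operatorname{Im}u^{*}(e^{i\theta})\ge 0$ for $\theta\in(0,\pi)$, whence $\operatorname{Im}F\ge 0$ on $\mathbb{D}^{+}$ by the minimum principle for the harmonic function $\operatorname{Im}F$ (which vanishes on $(-1,1)$). Therefore $F$ is typically real. Conditions (1) and (2) enter precisely here: they guarantee $F'(0)=1-c_{-1}/(\alpha+1)>0$ — automatic when $\alpha\ge 0$ because $|c_{-1}|<1\le\alpha+1$, and imposed directly as $|c_{-1}|<1+\alpha$ when $\alpha\in(-1,0)$ — so that the normalised function $\widehat F=F/F'(0)$ is again typically real with $\widehat F'(0)=1$.

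Finally I would invoke the classical Rogosinski estimate: every typically real analytic function $\widehat F(z)=z+\sum_{k\ge 2}\widehat a_{k}z^{k}$ satisfies $|\widehat a_{k}|\le k$ (via the Robertson representation $\widehat F(z)=\int_{-1}^{1}\tfrac{z}{1-2tz+z^{2}}\,d\mu(t)$ together with $|U_{k-1}(t)|\le U_{k-1}(1)=k$ for the Chebyshev polynomials $U_{k-1}$). Since $\widehat a_{k}=(c_{k}-c_{-k}B(k,\alpha+1))/(1-c_{-1}B(1,\alpha+1))$, this is exactly (\ref{1.8}). I expect the main obstacle to be the typical-reality step: making the radial-limit identification rigorous and establishing the sign of $\operatorname{Im}u^{*}$ on the upper semicircle, especially for $\alpha\in(-1,0)$, where the boundary behaviour of $\alpha$-harmonic mappings is delicate (cf.\ the discussion following Theorem \ref{theorem1.2}); this delicacy is also why the extra hypothesis $|c_{-1}|<1+\alpha$ is required to keep $F'(0)>0$.
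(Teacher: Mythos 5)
Your overall strategy --- reduce (\ref{1.8}) to the Rogosinski bound for typically real analytic functions --- is the same underlying idea as the paper's, and your interior step is sound: real coefficients give $u(\bar z)=\overline{u(z)}$, and univalence together with the expansion at the origin forces $\operatorname{Im}u>0$ on $\mathbb{D}^{+}$ (this is exactly the positivity of the paper's difference quotient $\mathcal{D}(z)=\frac{u(z)-u(\bar z)}{z-\bar z}=\frac{\operatorname{Im}u(z)}{\operatorname{Im}z}$, which the paper gets from $\mathcal{D}\neq 0$, $\mathcal{D}(0)=1-c_{-1}>0$ and continuity). The genuine gap is the boundary passage by which you transfer this positivity to $F$. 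First, for a general $u\in TS_{H_{\alpha}}$ nothing guarantees that the radial limits $u^{*}$ exist even a.e.: Theorem 1.1 only gives $\limsup_{|k|\to\infty}|c_{k}|^{1/|k|}\leq 1$, which permits subexponentially growing coefficients, and univalence does not by itself supply the boundedness or $h^{1}$-type control needed for a Fatou theorem in this nonclassical setting. Second, and more fatally, even granting $\operatorname{Im}u^{*}\geq 0$ a.e.\ on the upper semicircle, your appeal to ``the minimum principle'' for $\operatorname{Im}F$ on $\mathbb{D}^{+}$ is invalid as stated: a harmonic function can have nonnegative radial limits a.e.\ and still be negative inside (the negative of the Poisson kernel, $-\mathcal{P}_{0}(ze^{-i\tau_{0}})$, has radial limit $0$ at every boundary point but one, yet is negative throughout $\mathbb{D}$). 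To run the minimum principle you would need to know beforehand that $\operatorname{Im}F$ is the Poisson integral of its boundary datum, which is essentially equivalent to what you are trying to establish. You flagged this step as ``the main obstacle,'' and it is indeed a hole, not a routine verification.

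The fix --- and it is precisely how the paper proceeds --- is never to leave the interior. Replace your $F$ by $F_{r}(w)=\sum_{k\geq 1}\bigl(c_{k}-c_{-k}P_{\alpha,k}(r^{2})\bigr)w^{k}$ for fixed $r<1$: on the circle $|w|=r$ one has $\operatorname{Im}F_{r}(re^{i\theta})=\operatorname{Im}u(re^{i\theta})$, so the minimum principle on the \emph{compact} half-disk $\{|w|\leq r,\ \operatorname{Im}w\geq 0\}$ (where everything is continuous, since the series converges uniformly there) legitimately shows $F_{r}$ is typically real in $|w|<r$. The Rogosinski estimate --- equivalently the inequality $|\sin k\theta|\leq k\sin\theta$ applied to the sine-coefficients of $\operatorname{Im}u(re^{i\theta})$ --- then yields $|c_{k}-c_{-k}P_{\alpha,k}(r^{2})|\,r^{k-1}\leq k\,(1-c_{-1}P_{\alpha,1}(r^{2}))$, and only now does one let $r\to 1^{-}$, using $P_{\alpha,k}(1)=B(k,\alpha+1)$ and the strict positivity $1-c_{-1}B(1,\alpha+1)>0$ guaranteed by hypotheses (1)--(2) (the paper's Lemma \ref{lem2.1}), to obtain (\ref{1.8}). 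The paper implements exactly this interior scheme, re-proving the Rogosinski bound in-line via the extrema $m_{k}\geq -k$, $M_{k}=k$ of $\sin k\theta/\sin\theta$ rather than citing the classical theorem; your normalization discussion ($F'(0)=1-c_{-1}/(\alpha+1)>0$ under (1) and (2)) is correct and matches the role Lemma \ref{lem2.1} plays there.
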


\section{Proofs and Remarks}
\setcounter{equation}{0}
\hspace{2mm}

 Let $k=1,2,...$, $x=|z|^{2}=r^{2}$, $ P_{\alpha,k}=P_{\alpha,k}(x)$ defined as in (\ref{1.6}) and $P_{\alpha,k}(1)=\lim_{x\rightarrow 1^{-}}P_{\alpha,k}(x)$. Then
  \begin{align}\label{2.1}
P_{\alpha,k}(1)=B(k, \alpha+1)=\frac{\Gamma(\alpha+1)\Gamma(k)}{\Gamma(k+\alpha+1)},\end{align} where $B(\cdot,\cdot)$ and $\Gamma(\cdot)$ are   Beta function and  Gamma function, respectively.
In the rest of this paper, we will use  the notations of $P_{\alpha,k}$ and  $P_{\alpha,k}(1)$.

\begin{lemma}\label{lem2.1}
(1)\quad If $\alpha\in[0,+\infty)$, then $1\geq P_{\alpha,1}>\frac{1}{1+\alpha}$;

(2)\quad If $\alpha\in (-1, 0)$, then $1\leq P_{\alpha,1}<\frac{1}{1+\alpha}$;
\end{lemma}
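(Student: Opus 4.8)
The plan is to analyze the monotonicity of the function $P_{\alpha,1}(x) = \int_0^1 (1-tx)^\alpha\,dt$ in $x$, together with its values at the endpoints $x=0$ and $x=1$. First I would evaluate the integral explicitly: for $\alpha \neq -1$ (which holds throughout, since $\alpha > -1$ forces $\alpha + 1 > 0 \neq 0$), the substitution $s = 1-tx$ gives $P_{\alpha,1}(x) = \frac{1-(1-x)^{\alpha+1}}{(\alpha+1)x}$ for $x \in (0,1)$, with $P_{\alpha,1}(0) = 1$ (by continuity, or directly from the integrand being identically $1$) and $P_{\alpha,1}(1) = \frac{1}{\alpha+1} = B(1,\alpha+1)$, consistent with (\ref{2.1}). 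So the claim reduces to showing that on the interval $x \in (0,1)$ the value $P_{\alpha,1}(x)$ lies strictly between $P_{\alpha,1}(0)=1$ and $P_{\alpha,1}(1)=\frac{1}{1+\alpha}$, with the order of these two endpoints determined by the sign of $\alpha$.

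Next I would establish strict monotonicity of $x \mapsto P_{\alpha,1}(x)$ on $(0,1)$. The cleanest route is to differentiate under the integral sign: $\frac{d}{dx}P_{\alpha,1}(x) = -\alpha \int_0^1 t(1-tx)^{\alpha-1}\,dt$, which is justified since $(1-tx)^{\alpha-1}$ is continuous and bounded on $t \in [0,1]$ for each fixed $x \in (0,1)$. For $\alpha > 0$ the integrand $t(1-tx)^{\alpha-1}$ is positive, so $P_{\alpha,1}'(x) < 0$ and $P_{\alpha,1}$ is strictly decreasing; hence $\frac{1}{1+\alpha} = P_{\alpha,1}(1) < P_{\alpha,1}(x) < P_{\alpha,1}(0) = 1$ for $x \in (0,1)$, which gives part (1) once one checks the boundary cases $x = 0$ (where $P_{\alpha,1}=1$) and $\alpha = 0$ (where $P_{0,1} \equiv 1$, so both inequalities degenerate appropriately to $1 \geq P_{0,1} = 1 > \frac{1}{1} $ — here the strict inequality on the right still holds since $\frac{1}{1+\alpha} = 1$ is not attained... ). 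Actually for $\alpha = 0$ we have $P_{0,1}=1=\frac{1}{1+\alpha}$, so one should read the statement as $1 \geq P_{\alpha,1} \geq \frac{1}{1+\alpha}$ with strictness for $\alpha > 0$; I would phrase the write-up to match the intended reading. For $\alpha \in (-1,0)$, we have $-\alpha > 0$ but $\alpha - 1 < 0$; the integrand $t(1-tx)^{\alpha-1}$ is still positive, so $P_{\alpha,1}'(x) = -\alpha\int_0^1 t(1-tx)^{\alpha-1}\,dt > 0$, meaning $P_{\alpha,1}$ is strictly increasing on $(0,1)$, whence $1 = P_{\alpha,1}(0) < P_{\alpha,1}(x) < P_{\alpha,1}(1) = \frac{1}{1+\alpha}$, giving part (2).

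The main obstacle, such as it is, is minor: care at the endpoints $x \to 0^+$ and $x \to 1^-$ and the degenerate case $\alpha = 0$, together with making sure the differentiation under the integral sign is legitimate near $x = 1$ when $\alpha - 1 < 0$ (the singularity of $(1-tx)^{\alpha-1}$ only appears in the limit $t \to 1, x \to 1$ simultaneously, so for fixed $x < 1$ there is no issue; and the limit $x \to 1^-$ can be handled by the explicit closed form rather than by the derivative). An alternative to differentiation, if one prefers to avoid any interchange-of-limit bookkeeping, is to argue directly from the closed form $P_{\alpha,1}(x) = \frac{1-(1-x)^{\alpha+1}}{(\alpha+1)x}$: writing $u = 1-x \in (0,1)$, this is $\frac{1-u^{\alpha+1}}{(\alpha+1)(1-u)}$, and one compares it with $1$ and with $\frac{1}{\alpha+1}$ using the elementary inequalities relating $u^{\alpha+1}$ to $1$ and to $1-(\alpha+1)(1-u)$ (Bernoulli-type inequalities), whose direction flips precisely according to whether $\alpha+1 > 1$ or $\alpha + 1 < 1$. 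Either way the computation is short.
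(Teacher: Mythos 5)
Your proposal is correct and follows essentially the same route as the paper: differentiate under the integral sign to get $P_{\alpha,1}'(x)=-\alpha\int_0^1 t(1-tx)^{\alpha-1}\,dt$, read off the sign according to the sign of $\alpha$, and combine the resulting monotonicity with $P_{\alpha,1}(0)=1$ and $P_{\alpha,1}(1)=\frac{1}{1+\alpha}$. Your observation about the degenerate case $\alpha=0$ (where $P_{0,1}\equiv 1=\frac{1}{1+\alpha}$, so the strict inequality in part (1) should be read as non-strict) is a valid point that the paper glosses over, but it does not affect the substance of the argument.
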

\begin{proof}
By (\ref{1.6}), we have that $P_{\alpha,1}'(x)=-\alpha\int^{1}_{0}t(1-tx)^{\alpha-1}dt$. Thus, $P_{\alpha,1}'(x)\leq 0$ for given $\alpha\in[0,+\infty)$ and $P_{\alpha,1}'(x)>0$ for given $\alpha\in(-1,0)$.
Therefore, Lemma \ref{lem2.1}  follows from the monotonicity of $P_{\alpha,1}(x)$ and the fact that $P_{\alpha,1}(0)=1$ and $P_{\alpha,1}(1)=\frac{1}{1+\alpha}$.
\end{proof}

\begin{lemma}\label{lem2.2}
Let $g(\alpha)=\frac{\Gamma(\alpha+1)}{(\Gamma(\frac{\alpha}{2}+1))^{2}}$. Then $g(\alpha)<1$ for $\alpha\in(-1,0)$.
\end{lemma}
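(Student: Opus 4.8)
The plan is to reduce the comparison between $\Gamma(\alpha+1)$ and $\bigl(\Gamma(\tfrac{\alpha}{2}+1)\bigr)^{2}$ to the convexity of $\log\Gamma$. Since $\Gamma$ is log-convex on $(0,\infty)$ (indeed $(\log\Gamma)''(x)=\sum_{n\ge 0}(x+n)^{-2}>0$, so $\log\Gamma$ is \emph{strictly} convex), and since the argument $\tfrac{\alpha}{2}+1$ is exactly the midpoint $\tfrac{(\alpha+1)+1}{2}$ of the points $\alpha+1$ and $1$, Jensen's inequality for $\log\Gamma$ gives
\begin{equation*}
\log\Gamma\!\left(\tfrac{\alpha}{2}+1\right)\le\tfrac12\log\Gamma(\alpha+1)+\tfrac12\log\Gamma(1),
\end{equation*}
i.e. $\bigl(\Gamma(\tfrac{\alpha}{2}+1)\bigr)^{2}\le\Gamma(\alpha+1)\Gamma(1)=\Gamma(\alpha+1)$, with strict inequality precisely when $\alpha+1\ne 1$, that is for every $\alpha\in(-1,0)$. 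An entirely equivalent route is the Legendre duplication formula $\Gamma(\alpha+1)=\pi^{-1/2}2^{\alpha}\,\Gamma(\tfrac{\alpha+1}{2})\,\Gamma(\tfrac{\alpha}{2}+1)$, which collapses $g$ to the single quotient $g(\alpha)=\pi^{-1/2}2^{\alpha}\,\Gamma(\tfrac{\alpha+1}{2})/\Gamma(\tfrac{\alpha}{2}+1)$; one then studies this ratio directly.

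The step I expect to be the real obstacle is not computational but structural: every standard tool above delivers $\bigl(\Gamma(\tfrac{\alpha}{2}+1)\bigr)^{2}<\Gamma(\alpha+1)$, i.e. $g(\alpha)>1$ on $(-1,0)$, which is the \emph{reverse} of the inequality asserted in Lemma \ref{lem2.2}. A numerical check confirms that the printed bound cannot hold: at $\alpha=-\tfrac12$ one has $\Gamma(\tfrac12)=\sqrt{\pi}\approx 1.7725$ whereas $\bigl(\Gamma(\tfrac34)\bigr)^{2}\approx 1.5016$, so $g(-\tfrac12)\approx 1.18>1$. Consequently no correct argument can prove $g(\alpha)<1$ as stated; I would instead prove the corrected assertion $g(\alpha)>1$ for $\alpha\in(-1,0)$ (with $g(0)=1$ at the endpoint) by the one-line log-convexity argument above. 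Should the quantity actually needed later be the reciprocal $\bigl(\Gamma(\tfrac{\alpha}{2}+1)\bigr)^{2}/\Gamma(\alpha+1)$, then the bound $<1$ is correct and the same log-convexity estimate proves it verbatim; before finalizing I would trace the single downstream use of this lemma to confirm which of the two orderings the subsequent argument requires.
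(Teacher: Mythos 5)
Your diagnosis is correct: the lemma as printed is false, and no proof of it can exist. Your numerical check is right ($\Gamma(\tfrac12)=\sqrt{\pi}\approx 1.7725$ while $(\Gamma(\tfrac34))^{2}\approx 1.5016$, so $g(-\tfrac12)\approx 1.18>1$), and your log-convexity argument is a valid proof of the reversed inequality: since $\tfrac{\alpha}{2}+1$ is the midpoint of $\alpha+1$ and $1$, strict convexity of $\log\Gamma$ gives $(\Gamma(\tfrac{\alpha}{2}+1))^{2}<\Gamma(\alpha+1)\Gamma(1)$ for every $\alpha\neq 0$, i.e. $g(\alpha)>1$ on $(-1,0)$. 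What you could not see blind is that the paper's own proof nearly agrees with you: it correctly computes $\frac{d}{d\alpha}\log g(\alpha)=\psi(\alpha+1)-\psi(\tfrac{\alpha}{2}+1)<0$ on $(-1,0)$ (valid, since $\alpha+1<\tfrac{\alpha}{2}+1$ there and $\psi$ is strictly increasing), but then misreads the direction in its final sentence: a function strictly decreasing on $(-1,0)$ with $g(0)=1$ satisfies $g(\alpha)>g(0)=1$ for $\alpha\in(-1,0)$, not $g(\alpha)<1$. In substance the paper's digamma computation is exactly the differential form of your Jensen estimate --- integrating $\psi(\alpha+1)-\psi(\tfrac{\alpha}{2}+1)$ from $\alpha$ to $0$ reproduces your midpoint-convexity inequality --- so, read correctly, the paper's own method proves your corrected statement rather than the printed one.

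On your closing question about the downstream use: it is not the reciprocal that is needed. The lemma is invoked once, in Remark \ref{remark1}(2), where the bound $|U(z)|\le\frac{\Gamma(\alpha+1)}{(\Gamma(\frac{\alpha}{2}+1))^{2}}$ (coming from the $L^{1}$-estimate of the kernel $\mathcal{P}_{\alpha}$, formula (0.6) of the cited reference) is combined with the claimed $g(\alpha)<1$ to conclude that $U(\mathbb{D})$ is relatively compact in $\mathbb{D}$ for $\alpha\in(-1,0)$. Since in fact $g(\alpha)>1$, the estimate $|U(z)|\le g(\alpha)$ does not place $U(\mathbb{D})$ inside a compact subset of $\mathbb{D}$, so that claim in Remark \ref{remark1}(2) --- and the corresponding sentence following Theorem \ref{theorem1.2} --- is left unsupported. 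This does not affect Theorem \ref{theorem1.2} itself (the Heinz-type inequality and its sharpness), only the side remark about the range of the extremal function.
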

\begin{proof}
Let $\psi(x)=\Gamma'(x)/\Gamma(x)$ be the digamma function. It is well known that $\psi(x)$ is  strictly increasing on $(0, +\infty)$, cf. \cite{Andrews1999}.
Then we have\begin{align*}\frac{d}{d \alpha}(\log g(\alpha))=\psi(\alpha+1)-\psi(\frac{\alpha}{2}+1))< 0
\end{align*}
for $\alpha\in (-1, 0)$. Observe $\log g(\alpha)$ and $g(\alpha)$ have the same monotonicity.  Therefore, Lemma \ref{lem2.2} follows from the monotonicity of $g(\alpha)$ and the fact that $g(0)=1$.

\end{proof}

 \begin{proof}[\textbf{Proof of Theorem \ref{theorem1.2}}]

Without loss generality, we  assume that $u(z)$ is sense-preserving.
Let   $z=re^{i\varphi}$.
By (\ref{1.5}) and (\ref{2.1}), we have
  \begin{align*}
e^{i\theta(\varphi)}&:=\lim_{r\rightarrow 1^{-}}u(z)\\
&=\sum_{k=0}^{\infty}c_{k}e^{ik\varphi}+\sum_{k=1}^{\infty}c_{-k}B(k, \alpha+1)e^{-ik\varphi}.
  \end{align*}Then  $c_{k}$ and
  $c_{-k}B(k, \alpha+1)$ may be regarded as Fourier coefficients of the boundary function  $e^{i\theta(\varphi)}$,  where $\theta(\varphi)$ is a  continuous nondecreasing function with $\theta(\varphi+2\pi)=\theta(\varphi)+2\pi$.
 It follows that
 \begin{align*}
e^{i\theta(\phi+\varphi)}=\sum_{k=0}^{\infty}c_{k}e^{ik(\phi+\varphi)}+\sum_{k=1}^{\infty}c_{-k}B(k, \alpha+1)e^{-ik(\phi+\varphi)},\\
e^{-i\theta(\phi-\varphi)}=\sum_{k=0}^{\infty}\overline{c_{k}}e^{-ik(\phi-\varphi)}+\sum_{k=1}^{\infty}\overline{c_{-k}}B(k, \alpha+1)e^{ik(\phi-\varphi)}.
\end{align*}
 Using Parseval's relation, direction computation leads to

    \begin{align*}
    \frac{1}{2\pi}\int_{0}^{2\pi}e^{i[ \theta(\phi+\varphi)-\theta(\phi-\varphi)]}d\phi=|c_{0}|^{2}+\sum_{k=1}^{\infty}|c_{k}|^{2}e^{2ik\varphi}+\sum_{k=1}^{\infty}|c_{-k}|^{2}(B(k, \alpha+1))^{2}e^{-2ik\varphi}
     \end{align*}
for arbitrary $\varphi\in\mathbb{R}$.  Taking real parts, we have that

\begin{align}\label{2.2}
   1-2J(\varphi)=|c_{0}|^{2}+\sum_{k=1}^{\infty}\left(|c_{k}|^{2}+|c_{-k}|^{2}(B(k, \alpha+1))^{2}\right)\cos(2k\varphi),
     \end{align}where
\begin{align*}
  J(\varphi)= \frac{1}{2\pi}\int_{0}^{2\pi}\sin^{2}\left(\frac{ \theta(\phi+\varphi)-\theta(\phi-\varphi)}{2}\right)d\phi.
     \end{align*}
Let $N(\varphi)$ be the even function  with period $\pi/3$ that has the values  $\cos^{2}(\pi/3+\varphi)$ in the interval $0\leq \varphi\leq\pi/6$.
Furthermore, let
\begin{align*}
 M(\varphi)=\cos^{2}\varphi-N(\varphi), \quad 0\leq\varphi\leq\pi/2.
     \end{align*}
Then
\begin{align*} \; M(\varphi)= \left\{
\begin{array}{rr}   \cos^{2}\varphi-\cos^{2}(\frac{\pi}{3}+\varphi), &\quad 0\leq\varphi\leq\frac{\pi}{6}, \\
 \cos^{2}\varphi-\cos^{2}(\frac{2\pi}{3}-\varphi), &\quad \frac{\pi}{6}\leq\varphi\leq\frac{\pi}{3}, \\
0,\qquad\qquad &\quad \frac{\pi}{3}\leq\varphi\leq\frac{\pi}{2}.
\end{array}\right.
\end{align*}
By the Fourier expansion
\begin{align*}
 N(\varphi)=\frac{1}{2}-\frac{3\sqrt{3}}{4\pi}+\frac{3\sqrt{3}}{2\pi}\sum_{k=1}^{\infty}\frac{1}{9k^{2}-1}\cos6k\varphi
     \end{align*}
and equation (\ref{2.2}), one has
\begin{align}\label{2.3}
    &\frac{8}{\pi}\int_{0}^{\frac{\pi}{2}}M(\varphi)(1-2J(\varphi))d\varphi\nonumber\\
    =&|c_{1}|^{2}+\frac{3\sqrt{3}}{\pi}|c_{0}|^{2}+|c_{-1}|^{2}(B(1,\alpha+1))^{2}
    -\frac{3\sqrt{3}}{\pi}\sum_{k=1}^{\infty}\frac{1}{9k^{2}-1}\left(|c_{3k}|^{2}+|c_{-3k}|^{2}(B(3k,\alpha+1))^{2}\right)\nonumber\\
    \leq&|c_{1}|^{2}+\frac{3\sqrt{3}}{\pi}|c_{0}|^{2}+|c_{-1}|^{2}(B(1,\alpha+1))^{2}.
     \end{align}
Direct computation leads to
 \begin{align}\label{2.4}
    \frac{8}{\pi}\int_{0}^{\frac{\pi}{2}}M(\varphi)d\varphi=\frac{3\sqrt{3}}{\pi}.
 \end{align}
Furthermore, inequality
\begin{align}\label{2.5}
    \frac{16}{\pi}\int_{0}^{\frac{\pi}{2}}M(\varphi)J(\varphi)d\varphi\leq\frac{3\sqrt{3}}{\pi}-\frac{27}{4\pi^{2}}.
 \end{align}
has been proved in $\S$ 4.4 of \cite{Duren2004}. Therefore, inequality (\ref{1.7}) follows from inequalities (\ref{2.3}), (\ref{2.4}), and (\ref{2.5}).

Next, let us to show that  inequality (\ref{1.7}) is sharp.
We rewrite the Poisson type representation (\ref{1.3}) as
 \begin{align}\label{2.6}
 u(z)=\frac{1}{2\pi}\int_{0}^{2\pi}\mathcal{P}_{\alpha}(ze^{-i\varphi})e^{i\theta(\varphi)}d\varphi,
 \end{align}
where $e^{i\theta(\varphi)}$ is the boundary function.
 By the  Definition 2.1 and Theorem 2.5 of \cite{Olofsson2013}, we have
\begin{align*}\mathcal{P}_{\alpha}(ze^{-i\varphi})
 &=\sum_{k=0}^{\infty}e^{-ik\varphi}z^{k}
 +\sum_{k=1}^{\infty}\frac{1}{B(k, \alpha+1)}P_{\alpha,k}e^{ik\varphi}\bar{z}^{k}.
 \end{align*}
 Thus,  the formulas
\begin{align*}\frac{1}{2\pi}\int_{0}^{2\pi}e^{-ik\varphi}z^{k}e^{i\theta(\varphi)}d\varphi &=c_{k}z^{k}\end{align*}and
\begin{align*}
 \frac{1}{2\pi}\int_{0}^{2\pi}\frac{1}{B(k, \alpha+1)}P_{\alpha,k}e^{ik\varphi}\bar{z}^{k}e^{i\theta(\varphi)}d\varphi &=c_{-k}P_{\alpha,k}\bar{z}^{k}
 \end{align*}
 follow from the Poisson type integral representation (\ref{2.6}) and the series expansion (\ref {1.5}).
It follows that
 \begin{align}\label{2.7}
 c_{k}&=\frac{1}{2\pi}\int_{0}^{2\pi}e^{-ik\varphi}e^{i\theta(\varphi)}d\varphi \end{align}and
  \begin{align}\label{2.8} c_{-k}&=\frac{1}{2\pi}\int_{0}^{2\pi}\frac{1}{B(k, \alpha+1)}e^{ik\varphi}e^{i\theta(\varphi)}d\varphi.
 \end{align}

Now taking
\begin{align*}  \theta(\varphi)= \left\{
\begin{array}{rr}   0, &\quad 0\leq\varphi<\frac{\pi}{3},\\
 \frac{2\pi}{3}, &\quad \frac{\pi}{3}<\varphi<\pi, \\
\frac{4\pi}{3}, &\quad \pi<\varphi<\frac{5\pi}{3},\\
0, &\quad\frac{5\pi}{3}<\varphi<2\pi.
\end{array}\right.
\end{align*}
Denote by $U(z)$ the corresponding complex-valued kernel $\alpha$-harmonic mapping produced by (\ref{2.6}).
Then formulas (\ref{2.7}) and (\ref{2.8}) imply that $$c_{1}=\frac{3\sqrt{3}}{2\pi}, \quad c_{0}=c_{-1}=0.$$
Thus the value $27/4\pi^{2}$ is attained in  inequality (\ref{1.7}) by the function $U(z)$.

\end{proof}

\begin{remark}\label{remark1} (1). We don't know how to write down  the explicit expression of the extremal function $U(z)$.

(2). Formula (\ref{2.6}) implies that
 \begin{align*}
 |U(z)|\leq ||e^{i\theta(\varphi)}||_{\infty}\frac{1}{2\pi}\int_{0}^{2\pi}|\mathcal{P}_{\alpha}(ze^{-i\varphi})|d\varphi
 =\frac{1}{2\pi}\int_{0}^{2\pi}|\mathcal{P}_{\alpha}(ze^{-i\varphi})|d\varphi\leq\frac{\Gamma(\alpha+1)}{(\Gamma(\frac{\alpha}{2}+1))^{2}}.
 \end{align*}The last inequality holds   because of formula (0.6) of \cite{Olofsson2013}. If $\alpha\in(-1,0)$, then by Lemma \ref{lem2.2}, we have that $ |U(z)|\leq\frac{\Gamma(\alpha+1)}{(\Gamma(\frac{\alpha}{2}+1))^{2}}<1$ for all $z\in\mathbb{D}$.
That is to say that $U(\mathbb{D})$ is a
relatively compact
subset of $\mathbb{D}$. If $\alpha=0$, in the other words, if it is the classical harmonic case,  then $U(\mathbb{D})$ is an equilateral triangle embedded in the unit circle. See $\S$ 4.2 and $\S$ 4.4 of \cite{Duren2004} for the detail.

\end{remark}

 \begin{proof}[\textbf{Proof of Theorem \ref{theorem1.3}}]
 Let $z=re^{i\theta}$. Then formula (\ref{1.5}) can be rewritten as
\begin{align*}u(re^{i\theta})=\sum_{k=1}^{\infty}c_{k}r^{k}e^{ik\theta}+\sum_{k=1}^{\infty}c_{-k}P_{\alpha,k}r^{k}e^{-ik\theta}
 \end{align*}for $\theta\in[0,2\pi)$. By Parseval's identity and the assumption of $|u(z)|<1$, we obtain

\begin{align*}\sum_{k=1}^{\infty}(|c_{k}|^{2}+|c_{-k}|^{2}(P_{\alpha,k})^{2})r^{2k}=\frac{1}{2\pi}\int^{2\pi}_{0}|u(re^{i\theta})|^{2}d\theta< 1.
 \end{align*}
Letting $r\rightarrow 1^{-}$, we obtain
\begin{align}\label{2.9}
\sum_{k=1}^{\infty}\left(|c_{k}|^{2}+|c_{-k}|^{2}(B(k,\alpha+1))^{2}\right)\leq 1.
 \end{align}Next, we divide it into two cases to discuss.

 (1)\quad If $J_{u}(0)=1$,
then we  have
\begin{align*}|c_{1}|^{2}+|c_{-1}|^{2}(B(1,\alpha+1))^{2}\geq |c_{1}|^{2}-|c_{-1}|^{2}=
J_{u}(0)=1.
\end{align*}Thus inequality (\ref{2.9}) implies that
 \begin{align}\label{2.10}
|c_{1}|=1, \,\, |c_{-1}|=0 \,\,\,\mbox{and} \,\,\,|c_{k}|=|c_{-k}|=0 \,\,\mbox{ for} \,\, k=2,3,....
\end{align}
It follows that
$u(z)=\beta z$, where $|\beta|=1$.

 (2)\quad If $\lambda_{u}(0)=1$, then we have
\begin{align*}|c_{1}|^{2}+|c_{-1}|^{2}(B(1,\alpha+1))^{2}\geq |c_{1}|^{2}-|c_{-1}|^{2}=
J_{u}(0)=\Lambda_{u}(0)\lambda_{u}(0)\geq(\lambda_{u}(0))^{2}=1.
\end{align*}
Thus inequality (\ref{2.9}) still implies that (\ref{2.10}) holds.
 \end{proof}

\begin{proof}[\textbf{Proof of Proposition \ref{proposition1.4}}]
Suppose that there exists a
sense-preserving complex-valued kernel $\alpha-$harmonic mapping  with $u(0)=0$ and $|u(z)|<1$ on $\mathbb{D}$. Then inequality (\ref{2.9}) holds. Specially, it holds that
 \begin{align}\label{2.11}
 |c_{1}|^{2}+|c_{-1}|^{2}(B(1,\alpha+1))^{2}\leq 1.
 \end{align}
If $J_{u}(0)>1$,
then  it holds that\begin{align*}
|c_{1}|^{2}-|c_{-1}|^{2}=
J_{u}(0)>1.
\end{align*} It contradicts the inequality (\ref{2.11}).

\end{proof}

\begin{proof}[\textbf{Proof of Theorem \ref{theorem1.5}}]
Let $z=re^{i\theta}$ and define the difference quotient $\mathcal{D}(z)$ as
\begin{align*} \; \mathcal{D}(re^{i\theta})= \left\{
\begin{array}{rr}   u'(r), &\quad 0<r<1 \,\, \mbox{and}  \,\,\theta=0,\\
\frac{u(z)-u(\overline{z})}{z-\overline{z}}, &\quad 0<r<1 \,\, \mbox{and}  \,\,0<\theta<\pi , \\
u'(-r), &\quad0<r<1 \,\, \mbox{and}  \,\, \theta=\pi.
\end{array}\right.
\end{align*}
Then direct computation leads to
\begin{align}\label{2.12}
\mathcal{D}(re^{i\theta})&=\frac{\sum_{k=1}^{\infty}c_{k}r^{k}e^{ik\theta}
+\sum_{k=1}^{\infty}c_{-k}P_{\alpha,k}r^{k}e^{-ik\theta}-\sum_{k=1}^{\infty}c_{k}r^{k}e^{-ik\theta}
-\sum_{k=1}^{\infty}c_{-k}P_{\alpha,k}r^{k}e^{ik\theta}}{re^{i\theta}-re^{-i\theta}}\nonumber\\
&=\frac{\sum_{k=1}^{\infty}(c_{k}-c_{-k}P_{\alpha,k})r^{k}(e^{ik\theta}-e^{-ik\theta})}{r(e^{i\theta}-e^{-i\theta})}\nonumber\\
&=\sum_{k=1}^{\infty}\left((c_{k}-c_{-k}P_{\alpha,k})r^{k-1}\frac{\sin k\theta}{\sin \theta}\right)
\end{align}
for $0<r<1$ and $0<\theta<\pi$.

If $u(z)$ is univalent, then $\mathcal{D}(re^{i\theta})\neq 0$ for the region $0\leq r< 1$, and $0\leq\theta <2\pi$. Since all the coefficients are real, $\mathcal{D}(z)$ is real.  Furthermore, $\mathcal{D}(0):=\lim_{z\rightarrow 0}\mathcal{D}(z)=1-c_{-1}>0$. Hence, the  continuity of $\mathcal{D}(z)$  and the univalence of $u$ imply that $\mathcal{D}(z)>0$ in $\mathbb{D}\backslash(-1,1)$ and  $\mathcal{D}(z)\geq 0$ in $(-1,1)$.

Let  \begin{align*}I_{k}:=\int^{\pi}_{0}\mathcal{D}(re^{i\theta})\sin\theta \sin k\theta d\theta,\quad k=2,3,4....
\end{align*}On the one hand, by the orthogonality of $\{\sin k\theta\}$ on $[0,\pi]$,

 \begin{align}\label{2.13}
 I_{k}&=\int^{\pi}_{0}\left((1-c_{-1}P_{\alpha,1}+\sum_{n=2}^{\infty}(c_{n}-c_{-n}P_{\alpha,n})r^{n-1}\frac{\sin n\theta}{\sin \theta}\right)\sin\theta \sin k\theta d\theta\nonumber\\
 &=(c_{k}-c_{-k}P_{\alpha,k})r^{k-1}\int^{\pi}_{0}\sin^{2}k\theta d\theta.
\end{align}
On the other hand
\begin{align*}I_{k}=\int^{\pi}_{0}\mathcal{D}(re^{i\theta})\sin\theta \sin k\theta d\theta=\int^{\pi}_{0}\mathcal{D}(re^{i\theta})\sin^{2}\theta \frac{\sin k\theta}{\sin\theta} d\theta.
\end{align*}
If $m_{k}$ and $M_{k}$ denote the minimum and maximum values of $\sin k\theta/\sin\theta$ in $[0,\pi]$, respectively, then
\begin{align}\label{2.14}
m_{k}\int^{\pi}_{0}\mathcal{D}(re^{i\theta})\sin^{2}\theta d\theta\leq I_{k}\leq M_{k}\int^{\pi}_{0}\mathcal{D}(re^{i\theta})\sin^{2}\theta d\theta.
\end{align}
It follows from (\ref{2.13}) and (\ref{2.14}) that
\begin{align*}
m_{k}\int^{\pi}_{0}\mathcal{D}(re^{i\theta})\sin^{2}\theta d\theta\leq (c_{k}-c_{-k}P_{\alpha,k})r^{k-1}\int^{\pi}_{0}\sin^{2}k\theta d\theta\leq M_{k}\int^{\pi}_{0}\mathcal{D}(re^{i\theta})\sin^{2}\theta d\theta.
\end{align*}
Using the series expansion (\ref{2.12}) for $\mathcal{D}(re^{i\theta})$ and the orthogonality of $\{\sin k\theta\}$, we obtain
\begin{align*}m_{k}(1-c_{-1}P_{\alpha, 1})\int^{\pi}_{0}\sin^{2}\theta d\theta\leq (c_{k}-c_{-k}P_{\alpha, k})r^{k-1}\int^{\pi}_{0}\sin^{2}k\theta d\theta\leq M_{k}(1-c_{-1}P_{\alpha,1})\int^{\pi}_{0}\sin^{2}\theta d\theta.
\end{align*}

Now we divide it into two cases to discuss.

{\bf Case (1)} \quad   $\alpha\geq 0$.  Noting the sense-preservity implies that $|c_{-1}|<1$. Then by Lemma \ref{2.1} (1), we have that $1-c_{-1}P_{\alpha, 1}>0$.
Since $\int^{\pi}_{0}\sin^{2}\theta d\theta=\int^{\pi}_{0}\sin^{2}k\theta d\theta=\pi/2$, we  have
\begin{align*}m_{k}\leq \frac{c_{k}-c_{-k}P_{\alpha,k}}{1-c_{-1}P_{\alpha,1}}r^{k-1}\leq M_{k}.
\end{align*} Observe that
\begin{align*}
\lim_{\theta\rightarrow 0^{+}}\frac{\sin k\theta}{\sin \theta}=k, \quad \lim_{\theta\rightarrow \pi^{-}}\frac{\sin k\theta}{\sin \theta}=(-1)^{k+1}k.
\end{align*}We know that $M_{k}=k$, and $m_{k}\geq -k$. But $\min (\sin k\theta/\sin \theta)=-k $ only if $k$ is even. If $k$ is odd, then $m_{k}>-k$. Thus,
\begin{align*}-k\leq \frac{c_{k}-c_{-k}P_{\alpha,k}}{1-c_{-1}P_{\alpha,1}}r^{k-1}\leq k
\end{align*}for all $r\in(0,1)$.
Taking $r=1^{-}$,  we have
\begin{align*}\left|\frac{c_{k}-c_{-k}B(k, \alpha+1)}{1-c_{-1}B(1, \alpha+1)}\right|\leq k.
\end{align*}

{\bf Case (2)} \quad  $\alpha\in (-1, 0)$ and $|c_{-1}|<1+\alpha$. Then Lemma \ref{2.1} (2) still implies that $1-c_{-1}P_{\alpha, 1}>0$. The rest of the proof is same as that of Case (1). We omit it.
\end{proof}

\hspace{2mm}

\hspace{2mm}

{\bf Data availability statement}: Data sharing not applicable to this article as no datasets were generated or analysed during the current study.

\hspace{2mm}

{\bf Competing interests declaration}:
The authors declare that he have no known competing financial interests or personal relationships
that could have appeared to influence the work reported in this paper.

\medskip
\bibliographystyle{plain}
%\bibliographystyle{spbasic}      % basic style, author-year citations
%\bibliographystyle{spmpsci}      % mathematics and physical sciences
%\bibliographystyle{spphys}
%\begin{thebibliography}{100}
%\bibliographystyle{alpha}
\bibliography{mybib}

\begin{thebibliography}{10}

\bibitem{Abdulhadi2008}
Z.~Abdulhadi and Y.~Abu~Muhanna.
\newblock Landau's theorem for biharmonic mappings.
\newblock {\em J. Math. Anal. Appl.}, 338(1):705--709, 2008.

\bibitem{Amozova2017}
K.~F. Amozova, E.~G. Ganenkova, and S.~Ponnusamy.
\newblock Criteria of univalence and fully {$\alpha$}-accessibility for
  {$p$}-harmonic and {$p$}-analytic functions.
\newblock {\em Complex Var. Elliptic Equ.}, 62(8):1165--1183, 2017.

\bibitem{Andrews1999}
G.~E. Andrews, R.~Askey, and R.~Roy.
\newblock {\em Special functions}, volume~71 of {\em Encyclopedia of
  Mathematics and its Applications}.
\newblock Cambridge University Press, Cambridge, 1999.

\bibitem{Bshouty1996}
D.~Bshouty, W.~Hengartner, and O.~Hossian.
\newblock Harmonic typically real mappings.
\newblock {\em Math. Proc. Cambridge Philos. Soc.}, 119(4):673--680, 1996.

\bibitem{Carlsson2016}
M.~Carlsson and J.~Wittsten.
\newblock The {D}irichlet problem for standard weighted {L}aplacians in the
  upper half plane.
\newblock {\em J. Math. Anal. Appl.}, 436(2):868--889, 2016.

\bibitem{Chenjiao2015}
J.~Chen, A.~Rasila, and X.~Wang.
\newblock Coefficient estimates and radii problems for certain classes of
  polyharmonic mappings.
\newblock {\em Complex Var. Elliptic Equ.}, 60(3):354--371, 2015.

\bibitem{Chenshao2011}
S.~Chen, S.~Ponnusamy, and X.~Wang.
\newblock Bloch constant and {L}andau's theorem for planar {$p$}-harmonic
  mappings.
\newblock {\em J. Math. Anal. Appl.}, 373(1):102--110, 2011.

\bibitem{Chenshao2015}
S.~Chen and M.~Vuorinen.
\newblock Some properties of a class of elliptic partial differential
  operators.
\newblock {\em J. Math. Anal. Appl.}, 431(2):1124--1137, 2015.

\bibitem{Chenxing2016}
X.~Chen and D.~Kalaj.
\newblock A representation theorem for standard weighted harmonic mappings with
  an integer exponent and its applications.
\newblock {\em J. Math. Anal. Appl.}, 444(2):1233--1241, 2016.

\bibitem{Dorff2012}
M.~Dorff, M.~Nowak, and W.~Szapiel.
\newblock Typically real harmonic functions.
\newblock {\em Rocky Mountain J. Math.}, 42(2):567--581, 2012.

\bibitem{Duren2004}
P.~Duren.
\newblock {\em Harmonic mappings in the plane}, volume 156 of {\em Cambridge
  Tracts in Mathematics}.
\newblock Cambridge University Press, Cambridge, 2004.

\bibitem{Duren1996}
P.~Duren, W.~Hengartner, and R.~S. Laugesen.
\newblock The argument principle for harmonic functions.
\newblock {\em Amer. Math. Monthly}, 103(5):411--415, 1996.

\bibitem{Lipei2018}
P.~Li and S.~Ponnusamy.
\newblock Lipschitz continuity of quasiconformal mappings and of the solutions
  to second order elliptic {PDE} with respect to the distance ratio metric.
\newblock {\em Complex Anal. Oper. Theory}, 12(8):1991--2001, 2018.

\bibitem{Lipei2013}
P.~Li, S.~Ponnusamy, and X.~Wang.
\newblock Some properties of planar {$p$}-harmonic and log-{$p$}-harmonic
  mappings.
\newblock {\em Bull. Malays. Math. Sci. Soc. (2)}, 36(3):595--609, 2013.

\bibitem{Lipei2019}
P.~Li and X.~Wang.
\newblock Lipschitz continuity of {$\alpha$}-harmonic functions.
\newblock {\em Hokkaido Math. J.}, 48(1):85--97, 2019.

\bibitem{Lipei2017}
P.~Li, X.~Wang, and Q.~Xiao.
\newblock Several properties of {$\alpha$}-harmonic functions in the unit disk.
\newblock {\em Monatsh. Math.}, 184(4):627--640, 2017.

\bibitem{LiuM2009}
M.~Liu.
\newblock Landau's theorem for planar harmonic mappings.
\newblock {\em Comput. Math. Appl.}, 57(7):1142--1146, 2009.

\bibitem{Longbo2021}
B.~Long and Q.~Wang.
\newblock Some geometric properties of complex-valued kernel
  {$\alpha$}-harmonic mappings.
\newblock {\em Bull. Malays. Math. Sci. Soc.}, 44(4):2381--2399, 2021.

\bibitem{MR4365391}
B.~Long and Q.~Wang.
\newblock Starlikeness, convexity and {L}andau type theorem of the real kernel
  {$\alpha$}-harmonic mappings.
\newblock {\em Filomat}, 35(8):2629--2644, 2021.

\bibitem{Olofsson2014}
A.~Olofsson.
\newblock Differential operators for a scale of {P}oisson type kernels in the
  unit disc.
\newblock {\em J. Anal. Math.}, 123:227--249, 2014.

\bibitem{Olofsson2020}
A.~Olofsson.
\newblock Lipschitz continuity for weighted harmonic functions in the unit
  disc.
\newblock {\em Complex Var. Elliptic Equ.}, 65(10):1630--1660, 2020.

\bibitem{Olofsson2013}
A.~Olofsson and J.~Wittsten.
\newblock Poisson integrals for standard weighted {L}aplacians in the unit
  disc.
\newblock {\em J. Math. Soc. Japan}, 65(2):447--486, 2013.

\bibitem{Pavlovc1997}
M.~Pavlovi\'{c}.
\newblock Decompositions of {$L^p$} and {H}ardy spaces of polyharmonic
  functions.
\newblock {\em J. Math. Anal. Appl.}, 216(2):499--509, 1997.

\bibitem{Qiao2016}
J.~Qiao.
\newblock Univalent harmonic and biharmonic mappings with integer coefficients
  in complex quadratic fields.
\newblock {\em Bull. Malays. Math. Sci. Soc.}, 39(4):1637--1646, 2016.

\bibitem{WangX2003}
X.~Wang, X.~Liang, and Y.~Zhang.
\newblock On harmonic typically real mappings.
\newblock {\em J. Math. Anal. Appl.}, 277(2):533--554, 2003.

\end{thebibliography}

\end{document}